\documentclass[12pt]{article}
\usepackage[margin=2in,includefoot,footskip=12pt,]{geometry}
\usepackage{amsfonts,amsmath, amsthm, amssymb,hyperref,fullpage}
\usepackage{graphicx}
\usepackage{listings}
\usepackage{xcolor,enumitem}
\usepackage{tikz}
\usepackage{algorithm}
\usepackage{tcolorbox}
\usepackage[noend]{algpseudocode}
\usetikzlibrary{arrows}
\usetikzlibrary{graphs,graphs.standard}
\usetikzlibrary{shapes.geometric}
\usetikzlibrary{svg.path}
\usepackage{bm}
\usetikzlibrary{patterns}

\newtheorem{theorem}{Theorem}[section]
\newtheorem{lemma}{Lemma}[section]
\newtheorem{cor}{Corollary}[section]

\newtheorem{remark}{Remark}[section]

\DeclareMathOperator{\diam}{diam}
\DeclareMathOperator{\tr}{Tr}
\DeclareMathOperator{\vol}{vol}

\newcommand{\affl}[3]{%
	\noindent #1,
	\textsc{#3}\\
	Email: \texttt{#2}\\[1.5pt]
}

\title{ On the $l_\infty$-analog of Algebraic Connectivity}

\author{ M. Rajesh Kannan,  Rahul Roy}

\date{\today}

\begin{document}
	\maketitle
	\begin{abstract} 
		
		The algebraic connectivity of a graph, defined as the second smallest eigenvalue of its Laplacian matrix, admits a well-known variational characterization involving the $\ell_2$-norm. Motivated by the recent introduction of its $\ell_\infty$-analogue by Andrade and Dahl, we investigate the graph parameter $\gamma(G)$, obtained by replacing the $\ell_2$-norm with the $\ell_\infty$-norm in the corresponding optimization problem. We establish a simple and explicit combinatorial formula expressing $\gamma(G)$ as the ratio of the order of the graph to its maximum transmission, thereby providing a direct graph-theoretic interpretation of the parameter. As a consequence, we obtain a polynomial-time algorithm based on breadth-first search, significantly simplifying the previously known linear programming approach. We prove that $\gamma(G)$ characterizes graph connectivity and completely characterize all $\ell_\infty$-Fiedler vectors as the vectors
		\[
		\left\{\pm\left(1-\gamma(G)d(u,\cdot)\right):u\in \mathcal{M}(G)\right\},
		\]
		where $\mathcal{M}(G)$ denotes the set of vertices of maximum transmission. Furthermore, we derive rms of several classical graph invariants, including the distance spectral radius, Wiener index, algebraic connectivity, and Cheeger constant. Finally, we establish a product formula for $\gamma(G)$ under Cartesian products of graphs, leading to explicit expressions for important graph families such as hypercubes, Hamming graphs, grid graphs, and torus graphs.

	\end{abstract}
	
	{\bf AMS Subject Classification(2010):} 05C12, 05C50, 05C76.
	
	\textbf{Keywords.} Algebraic connectivity, Distance matrix, Transmission, Cartesian product, $l_\infty$-norm.
	\section{Introduction}
	The Laplacian matrices of graphs, along with their eigenvalues and eigenvectors, have found numerous applications in various domains, including combinatorial optimization, electrical networks, graph signal processing, data science, and machine learning. The second smallest eigenvalue of the Laplacian matrix provides a robust measure of the connectivity of the associated graph \cite{alg-con}.  It also serves as a lower or upper bound for several important graph parameters, such as the isoperimetric number, maximum cut, independence number, genus, and diameter \cite{mohar-lap-sur-1, mohar-eigen-comb-opti, trevisan_spec-partition}. The corresponding eigenvector is widely used in graph partitioning problems and data clustering, a method known as spectral clustering \cite{trevisan_spec-partition, ulrike-spectral}. In his seminal paper \cite{alg-con}, Fiedler coined the term algebraic connectivity for the second smallest eigenvalue of the Laplacian matrix. The algebraic connectivity of a graph $G$, denoted by $a(G)$,  has an interesting optimization characterization involving the $l_2$-norm in the objective function and the constraints. Recently, in \cite{enide-geir-2024}, Andrade and Dahl studied the counterparts of this optimization problem with respect to the $l_1$-norm and the $l_\infty$-norm. The corresponding optimal values are denoted by $b(G)$ and $\gamma (G)$, respectively. They showed that $b(G)$ has an interesting connection to the sparsest cut problem, implying that computing $b(G)$ is an NP-hard problem. They also proposed a linear programming formulation to compute $\gamma (G)$. 
	
	In this article, we provide an explicit combinatorial formula for $\gamma (G)$ in terms of the maximum row sum of the distance matrix of $G$. This leads to a polynomial-time combinatorial algorithm for computing $\gamma (G)$. Further, we study the connections between  $\gamma (G)$ and various graph parameters associated with $G$. In addition, we derive a formula for $\gamma(G)$ when $G$ is the Cartesian product of finitely many graphs.

	Let $G = (V(G), E(G))$ be an unweighted, undirected, and simple graph, where $V(G)$ is the set of vertices and $E(G)$ is the set of edges of $G$. Let the number of vertices in $G$, denoted by $\vert V(G) \vert $, be $n$. If the vertex $ u $ is adjacent to the vertex $v$, then we write $ u \sim v $.  The \textit{adjacency matrix} of a simple graph $ G $, denoted by $ A(G) $,  is the symmetric  $ n \times n $  matrix whose  $ (u,v)$-th entry is defined by $ a_{uv}=1 $ if $ u \sim v $, and  $ a_{uv} = 0 $ otherwise. The \textit{Laplacian matrix} $L(G)$ is the $n\times n$  matrix defined as 
	$$
	L(G)=\Delta(G)-A(G),
	$$
	where $\Delta(G)$ is the \textit{diagonal matrix} with the degrees of the vertices of $G$ as its diagonal elements, and $A(G)$ is the \textit{adjacency matrix} of $G$. The eigenvalues of adjacency matrices can be used to obtain various structural properties of graphs, such as bipartiteness, regularity, and more. Additionally, the eigenvalues of adjacency matrices provide bounds for several graph parameters, including the chromatic number, clique number, independence number, and others \cite{graphs-and-matrices, ratio-bound,  nica-book-2018, clique-indep-bound-wilf}.
	On the other hand, the eigenvalues of the Laplacian matrix of a graph reveal information about the connectedness of the graph $G$. In particular, $0$ is always an eigenvalue of $L(G)$, with the all-ones vector as a corresponding eigenvector. The second smallest eigenvalue of $L(G)$, denoted by $a(G)$, is called the \textit{algebraic connectivity} of the graph $G$ \cite{alg-con}. It is well known that $G$ is connected if and only if $a(G) > 0$ \cite{fiedler-1989}.
	An eigenvector corresponding to $a(G)$ is called a \textit{Fiedler vector} \cite{graphs-and-matrices}. The optimization approach to computing $a(G)$ is well known and is based on the Courant-Fischer theorem \cite{matrix-analysis}. A variational characterization of $a(G)$ is given as follows:
	\begin{equation*}
		a(G)=\min  \biggl\{\sum_{uv \in E(G)}(x_u - x_v)^2: \sum_{v \in V(G)} x_v =0 ~\mbox{and}~\Vert x \Vert _{2}=1\ \biggl\},
	\end{equation*} 
	where $\Vert x \Vert_2 := \sqrt{\sum\limits_{i=1}^n \vert x_i \vert^2}$ is the $l_2$-norm of the vector $x\in \mathbb{R}^n$. Note that the vector $x$ that attains the minimum in the above optimization problem has the following property: the entries of $x$ exhibit the minimum squared difference along the edges of the graph. This can be viewed as an "optimal" variation along the edges of $G$. For this reason, Andrade and Dahl \cite{enide-geir-2024} referred to this as the $l_2$-smoothing problem. Furthermore, they investigated analogous problems under the $l_1$-norm and the $l_\infty$-norm, considering both the objective function and the constraint \cite{enide-geir-2024}. This led to the introduction of two new parameters, defined as follows:
	$$
	b(G)=\min\biggl\{\sum_{uv \in E(G)}\vert x_u - x_v\vert: \sum_{v \in V(G)}x_v=0~\mbox{and}~ \Vert x \Vert_{1}=1\biggl\},
	$$
	$$
	\gamma(G)=\min\biggl\{\max_{uv \in E(G)}\vert x_u - x_v\vert: \sum_{v \in V(G)} x_v =0 ~\mbox{and}~\Vert x \Vert _{\infty}=1\biggl\}.
	$$
	Here, $\Vert x \Vert_1 := \sum\limits_{i=1}^n |x_i|$ denotes the $l_1$-norm, and $\Vert x \Vert_\infty := \max \{ |x_i| : 1 \leq i \leq n \}$ denotes the $l_\infty$-norm of the vector $x \in \mathbb{R}^n$. These optimization problems are well-defined, as the minima are attained due to the compactness of the constraint sets and the continuity of the objective functions. The vectors that give the optimal solutions are referred to as the {$l_1$-\textit{Fiedler vector} and the {$l_\infty$-\textit{Fiedler vector}, respectively. Furthermore, they formulated the following linear programming problem to compute  $\gamma(G)$. Let $G$ be a graph with $n$ vertices. For every $k$ with $1 \leq k \leq n$, consider the linear programming problem $LP(k)$:
			\begin{align*}
				\text{Minimize} \quad & y \\
				\text{subject to} \quad & x_i - x_j \leq y \quad  (ij \in E(G)), \\
				& -(x_i - x_j) \leq y \quad (ij \in E(G)), \\
				& \sum_{j=1}^{n} x_j = 0, \\
				& -1 \leq x_j \leq 1 \quad (1\leq j \leq n), \\
				& x_k = 1.
			\end{align*}
			Solving $LP(k)$ for each $1 \leq k \leq n$, and choosing the minimum value of $y$ obtained gives the value of $\gamma(G)$. 
			
			The primary objective of this paper is to provide an elegant combinatorial expression for the quantity $\gamma(G)$ in terms of the maximum row sum of the distance matrix of the graph $G$. Furthermore, in the case of trees, it suffices to consider only the maximum row sum corresponding to the pendant vertices. This is done in Section \ref{main-results}. Next, in Section \ref{compari-other}, we establish bounds for $\gamma(G)$ in terms of well-known graph-theoretic parameters, namely the distance spectral radius, Wiener index, algebraic connectivity, Cheeger constant, and others. We also study the corresponding extremal graphs. In Section \ref{sec-product}, we present the results on the Cartesian product of graphs in relation to $\gamma (G)$. 
			\section{Preliminaries}\label{sec_pre}
			In this section, we recall some of the known definitions and results that will be used later.
			For a vertex $u\in V(G)$, let $\deg(u)$ denote the degree of $u$ in $G$. A vertex $v$ of a graph $G$ is called a \textit{pendant vertex}, if $\deg(v) =1$. The distance between two vertices $u$ and $v$ is denoted by $d(u,v)$. The \textit{diameter} of a graph $G$, denoted by $\diam(G)$, is the maximum distance between any pair of vertices in $G$, that is,
			$$
			\diam(G)=\max_{ u,v \in V(G)} d(u,v).
			$$
			Let $u \in V(G)$. Define 
			$$
			S_r(u)=\biggl\{v \in V(G) : d(u,v)=r \biggl\}.
			$$
			The distance matrix $\mathcal{D}(G)$ of a graph $G$ is the $n \times n$ matrix in which the $(u,v)$-entry $\mathcal{D}_{uv}$ is given by the distance $d(u,v)$ between the vertices $u$ and $v$. The spectrum of $\mathcal{D}(G)$, denoted by  $\{\partial_1(G),\partial_2(G),\dots,\partial_n(G)\}$, consists of the eigenvalues of $\mathcal{D}(G)$ arranged in non-increasing order, that is, $\partial_1 (G) \geq \partial_2 (G) \geq \dots \geq \partial_n (G)$.
			
			An $n \times n$ matrix $A$ is said to be \textit{cogredient} to a matrix $E$ if there exists a permutation matrix $P$ such that $PAP^T=E$, where $P^T$ denotes the transpose of the matrix $P$. A matrix $A$ is called \textit{reducible} if it is cogredient to a block matrix of the form:
			$$
			E = \begin{bmatrix}
				B & 0 \\
				C & F
			\end{bmatrix},
			$$ where $B$ and $F$ are square matrices. Further, if $n=1$ and $A=0$, then $A$ is also considered reducible. Otherwise, $A$ is called \textit{irreducible}.
			The well-known Perron-Frobenius theorem states that if $A$ is non-negative and irreducible, then the spectral radius $\rho(A)$ is a simple eigenvalue of $A$   and any eigenvalue of $A$ that has the same modulus is also simple. Moreover, $A$ admits a positive eigenvector $x$ corresponding to $\rho(A)$, and every non-negative eigenvector of $A$ is a scalar multiple of $x$. For more details about the Perron-Frobenius theory, we refer to \cite{Non-negative-matrices}.  It is easy to see that the distance matrix $\mathcal{D}(G)$ of a connected graph $G$ is irreducible. Hence, by Perron-Frobenius theorem, we have $\partial_1(G)$ is simple and there is an eigenvector corresponding to $\partial_1(G)$ with strictly positive coordinates. 
			
			The \textit{transmission} of a vertex $u \in V(G)$, denoted by  $\tr(u)$, is the sum of distances from $u$ to all other vertices in $G$ \cite{Distance-spectra}. That is, 
			$$
			\tr(u)=\sum_{ v\in V(G)}d(u,v)
			$$
			A graph $G$ is said to be \textit{transmission-regular}, if $\tr(v)=k$  for some $k\in \mathbb{N}$ and for every vertex $v$ in $G$.
			A graph $G$ is called \textit{vertex-transitive} \cite{nica-book-2018} if, for any pair of vertices, there exists an automorphism of $G$ that maps one vertex to the other. In simpler terms, the structure of the graph looks the same from every vertex. These graphs are highly symmetric--- for example, the complete graph $K_n$, the cycle graph $C_n$, and the Petersen graph. 
			
			\section{A combinatorial formula for $\gamma (G)$}\label{main-results}
			In this section, we study the combinatorial significance of the quantity $\gamma(G)$. We begin by showing that $\gamma(G)$ characterizes the connectedness of a graph, as established in Theorem \ref{gamma-connected}.
			Then, in Theorem \ref{main theorem}, we prove that $\gamma(G)$ admits a neat combinatorial formula in terms of the transmission of vertices. Next, Theorem \ref{set of vectors} provides a complete characterisation of the $l_\infty$-Fiedler vectors.  We present a polynomial-time algorithm for computing $\gamma(G)$ based on the transmission values of the vertices of $G$. Furthermore, in Theorem \ref{pendant}, we show that for trees, it suffices to compute the transmission of the pendant vertices to determine $\gamma(G)$.
			The following notation will be used to derive the results presented in this section. Let $$\mathcal{F}=\biggl\{x \in \mathbb{R}^n:\sum_{v \in V(G)} x_v =0 ~\mbox{and} ~\Vert x \Vert_{\infty}=1\biggl\}.$$
			For a given $x \in \mathcal{F}$, define $$ \gamma_x(G)=\max_{uv \in E(G)}\vert x_u - x_v \vert .$$ In this notation, we have $$\gamma(G)=\min_{x \in \mathcal{F}}\gamma_x(G). $$ A vector  $x \in \mathcal{F}$ is called an $l_\infty$-\textit{Fiedler vector} of $G$ if $$\gamma(G)=\gamma_x(G).$$

			\begin{theorem}\label{gamma-connected}
				Let $G$ be a graph. Then $G$ is disconnected if and only if $\gamma(G)=0$.
			\end{theorem}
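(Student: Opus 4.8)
The plan is to exploit the elementary observation that for any $x \in \mathcal{F}$ we have $\gamma_x(G) = 0$ if and only if $x_u = x_v$ for every edge $uv \in E(G)$; equivalently, $x$ is constant on each connected component of $G$. Since the objective $\gamma_x(G)$ is a maximum of absolute values, it is always nonnegative, so $\gamma(G) = 0$ holds precisely when some feasible $x$ makes every edge-difference vanish. Both directions of the equivalence then follow by short feasibility arguments, with connectedness entering only through whether such a piecewise-constant vector can meet the two constraints $\sum_v x_v = 0$ and $\Vert x \Vert_\infty = 1$ simultaneously.

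For the forward direction, suppose $G$ is disconnected. Then I would fix a connected component $W \subsetneq V(G)$ and set $W' = V(G) \setminus W$, both nonempty, with no edges between them. Define $x$ to be constant equal to $|W'|$ on $W$ and constant equal to $-|W|$ on $W'$. This $x$ is constant on each component, so $\gamma_x(G) = 0$, and it satisfies $\sum_{v} x_v = |W|\,|W'| - |W'|\,|W| = 0$. Rescaling $x$ by $1/\max\{|W|,|W'|\}$ preserves both properties while forcing $\Vert x \Vert_\infty = 1$, so the rescaled vector lies in $\mathcal{F}$. Hence $\gamma(G) \le \gamma_x(G) = 0$, and combined with $\gamma(G) \ge 0$ this gives $\gamma(G) = 0$.

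For the reverse direction, suppose $\gamma(G) = 0$. Using the compactness of $\mathcal{F}$ and the continuity of $\gamma_{(\cdot)}(G)$, already noted in the excerpt, the minimum is attained by some $l_\infty$-Fiedler vector $x \in \mathcal{F}$ with $\gamma_x(G) = 0$. By the opening observation, $x$ is constant on each connected component of $G$. If $G$ were connected, then $x$ would equal a single constant $c$ on all of $V(G)$, and the constraint $\sum_{v} x_v = nc = 0$ would force $c = 0$, hence $x = 0$, contradicting $\Vert x \Vert_\infty = 1$. Therefore $G$ must be disconnected.

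I do not expect a genuine obstacle here, as the proof is essentially a feasibility argument. The only point requiring mild care is the forward construction: verifying that the explicit two-valued vector simultaneously satisfies the zero-sum and unit-$l_\infty$-norm constraints after normalization, which relies on both parts of the partition being nonempty, a fact guaranteed by disconnectedness.
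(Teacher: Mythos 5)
Your proof is correct and follows essentially the same route as the paper: an explicit two-valued zero-sum vector constant on a component and its complement for the forward direction, and the observation that a zero-objective Fiedler vector on a connected graph must be constant, hence zero, contradicting the norm constraint, for the converse. The only cosmetic difference is your normalization of the explicit vector and that you take one component against the rest (which handles any number of components), whereas the paper phrases the construction for two components.
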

			\begin{proof}
				Let $G$ be a disconnected graph having two components $C_1$ and $C_2$. Without loss of generality,  let $|C_1|\leq |C_2|$. Consider the vector $x$
				
				$$   x_v =
				\left\{
				\begin{array}{ll}
					1  & \mbox{if } v \in V(C_1), \\
					-\frac{|C_1|}{|C_2|} & \mbox{if } v \in V(C_2).
				\end{array}
				\right.
				$$
				
				Then, it is easy to see that  $x \in \mathcal{F}$ and $\gamma_x(G) =0$. So $\gamma(G)=0$.
				
				Conversely, let $G$ be a graph with $\gamma(G)=0$. Let $x$ be an $l_{\infty}$-\textit{Fiedler vector}. Then $\gamma_x(G) =0$, which implies that $x_u=x_v$ whenever $uv \in E(G)$. 
				If $G$ is connected, then the vector $x$ should be a multiple of the all-ones vector.  However, this contradicts the assumption that $x\in \mathcal{F} $. Thus $\gamma (G) \neq 0.$
			\end{proof}
			Let $\mathcal{D}_M(G)$ denote the maximum row sum of $\mathcal{D}(G)$. Next, we prove the main result of this section.
			\begin{theorem}\label{main theorem}
				Let $G$ be a connected graph on $n$ vertices. Then
				$$
				\gamma(G)=\frac{n}{\max\limits_{u \in V(G)}  \tr(u)}=\frac{n}{\mathcal{D}_M(G)}.
				$$
				
				\begin{proof}
					Let $x \in \mathcal{F}$. Without loss of generality, let $x_u=1$ for some $u \in V(G)$. 
					Thus, for $v \in V(G)$, we have
					$$
					x_v \geq 1- \gamma_x(G) d(u,v).
					$$
					Consequently,
					$$
					0=\sum\limits_{v \in V(G)} x_v\geq \sum\limits_{v \in V(G)} (1- \gamma_x(G) d(u,v))=n- \gamma_x(G) \tr(u).
					$$
					Therefore
					$$
					\gamma_x(G) \geq \frac{n}{\tr(u)}\geq \frac{n}{\mathcal{D}_M(G)}.
					$$
					Next, let us prove the other side of the inequality holds for some vector. Let $u \in V(G)$ with $\tr(u)=\mathcal{D}_M(G)$. Define
					$$
					y_v=1 - \frac{n}{\mathcal{D}_M(G)}d(u,v).
					$$
					Then
					$$
					\sum\limits_{v\in V(G)}y_v=\sum\limits_{v\in V(G)}\left(1 - \frac{n}{\mathcal{D}_M(G)}d(u,v)\right)=n-\frac{n}{\mathcal{D}_M(G)}\tr(u)=0.
					$$
					Let $d=\diam(G)$. Choose a diametrical pair $a,b$. For every $v$,
					\begin{align*}
						d(a,v)+d(b,v) &\geq d\\
						\sum\limits_{v \in V(G)}(d(a,v)+d(b,v))&\geq nd\\
						\tr(a)+\tr(b) &\geq nd
					\end{align*}
					Also $\mathcal{D}_M(G) \geq \tr(a)$ and $\mathcal{D}_M(G) \geq \tr(b)$. Thus
					$$
					2\mathcal{D}_M(G) \geq nd.
					$$
					Therefore 
					\begin{equation}
						\mathcal{D}_M(G) \geq \frac{nd}{2}.\notag
					\end{equation}
					Since $d(u,v) \leq d$ for any $v\in V(G)$, we have,
					$$
					y_v=1 - \frac{n}{\mathcal{D}_M(G)}d(u,v)\geq 1-\frac{nd}{\mathcal{D}_M(G)}\geq 1-2=-1.
					$$
					Thus $y \in \mathcal{F}$.\\
					Finally, for an edge $vw$
					$$
					\vert d(u,v) -d(u,w) \vert \leq 1.
					$$
					\begin{align*}
						\vert y_v -y_w\vert &=\vert 1 - \frac{n}{\mathcal{D}_M(G)}d(u,v)-1+\frac{n}{\mathcal{D}_M(G)}d(u,w)\vert\\
						&\leq \frac{n}{\mathcal{D}_M(G)}\vert d(u,w) -d(u,v) \vert \\
						& \leq\frac{n}{\mathcal{D}_M(G)}.
					\end{align*}
					Therefore $ \gamma_y(G) \leq \frac{n}{\mathcal{D}_M(G)}$. Thus, $$\gamma(G)=\frac{n}{\max\limits_{u \in V(G)}  \tr(u)}=\frac{n}{\mathcal{D}_M(G)}.$$
				\end{proof}
			\end{theorem}
			Define
			$$
			\mathcal{M}(G)=\biggl\{u \in V(G): \tr(u)=\mathcal{D}_M(G)\biggl\}.
			$$
			In the following theorem, we fully characterise all $l_\infty$-Fiedler vectors associated with the graph $G$.
			\begin{theorem}\label{set of vectors}
				For every $u \in \mathcal{M}(G)$, define
				$$
				x^{(u)}_v=1-\gamma(G)d(u,v).
				$$
				Then the full set of $l_\infty$-Fiedler vectors is 
				$$
				\biggl\{\pm x^{(u)} : u \in \mathcal{M}(G) \biggr\},
				$$
				apart from repetitions when different maximum-transmission vertices produce the same vector.
				\begin{proof}
					Let $x$ be an $l_\infty$-Fiedler vector. Without loss of generality, let $x_u=1$ for some $u \in V(G)$. Then
					$$
					x_v \geq 1- \gamma(G) d(u,v).
					$$
					Using Theorem \ref{main theorem} and summing over all vertices, we obtain
					$$
					0\geq n-\gamma(G) \tr(u)=n(1-\frac{\tr(u)}{\mathcal{D}_M(G)})\geq 0.
					$$
					Therefore, equality holds everywhere. Hence 
					$$
					\tr(u)=\mathcal{D}_M(G)
					$$
					and 
					$$
					x_v = 1- \gamma(G) d(u,v)
					$$
					for every vertex $v \in V(G)$.
				\end{proof}
			\end{theorem}
			As a consequence of Theorem \ref{main theorem}, we present the following algorithm (in pseudocode) to compute the quantity $\gamma(G)$ for any arbitrary connected graph $G$.
			\begin{algorithm}[H]
				\caption{Algorithm to compute $\gamma(G) $}
				\textbf{Input:} A connected graph $G$ on $n$ vertices

				\textbf{Output:} $\gamma(G)$
				
				\begin{algorithmic}[1]
					\State $maxTransmission := 0$
					\For{each vertex $v_i$ in $V(G)$}
					\For{each vertex $v_j$ in $V(G)$}
					\State Compute $d(v_i, v_j)$ 
					\EndFor
					\State $\tr(v_i) := \sum\limits_{j \ne i} d(v_i, v_j)$ 
					\If{$\tr(v_i) > maxTransmission$}
					\State $maxTransmission := \tr(v_i)$
					
					\EndIf
					\EndFor
					\State \Return $(\frac{n} {maxTransmission})$
				\end{algorithmic}
			\end{algorithm}
			\begin{remark} Since the distance between any two vertices in a connected graph can be computed in polynomial time, it follows that, by using Theorem \ref{main theorem}, $\gamma(G)$ can also be computed in polynomial time using the BFS technique.
			\end{remark}
			
			We next apply the formula given in Theorem \ref{main theorem} to determine $\gamma(G)$ for some well-known graph families.
			\begin{cor}\label{basic examples}
				\begin{enumerate}
					\item [(a)] Let $K_n$ denote the complete graph on $n$ vertices. Then, $\gamma(K_n)=\frac{n}{n-1}$.
					\item [(b)] Let $C_n$ denote the cycle graph on $n$ vertices. Then,
					$\gamma(C_n)=\frac{n}{\lfloor \frac{n^2}{4}\rfloor}$.
					\item[(c)] Let $S_n$ denote the star graph on $n$ vertices. Then $\gamma(S_n)=\frac{n}{2n-3}.$\label{star graph}

					\item[(d)]   Let $K_{m,n}$ denote the complete bipartite graph where the partite sets have sizes $m$ and $n$ with $m \geq n$. Then $\gamma(K_{m,n})=\frac{m+n}{2m+n-2}$.
					
					\begin{proof}
						\textbf{$(a)$} Since every vertex in $K_n$ is adjacent to every other vertex, $\tr(u)=n-1$ for every $u \in V(K_n)$, we get $\gamma (K_n) = \frac{n}{n-1}$.
						
						\textbf{$(b)$} Let $n=2l$ for some $l \in \mathbb{N}$ and $l \geq 2$. Let $u \in V(C_{2l})$ and $\vert S_r(u) \vert=a_r$. Then,
						$\diam(C_{2l})=l$ and $a_1=a_2=\dots=a_{l-1}=2$, $a_l=1$. Thus,
						$$
						\tr(u)=2\sum^{l-1}_{r=1}r+l=l^2.
						$$
						Hence $\gamma(C_{2l})=\frac{2l}{l^2}=\frac{2}{l}$.\\
						
						Let $n=2l+1$ for some $l \in \mathbb{N}$ and $l \geq 1$. Let $u \in  V(C_{2l+1})$. Similarly we have
						$$
						\tr(u)=2\sum^{l}_{r=1}r=l(l+1),
						$$
						for every $u\in V(C_{2l+1})$. Hence $\gamma(C_{2l+1})=\frac{2l+1}{l(l+1)}$.
						
						Therefore, combining the results for even and odd values of $n$, we obtain $\gamma(C_n)=\frac{n}{\lfloor \frac{n^2}{4}\rfloor}$.
						\textbf{$(c)$} Let $u$ be the center vertex of $S_n$. Then $\tr(u)=n-1$. If $v$ is a pendant vertex, then $\tr(v)=2n-3$. So, $\mathcal{D}_M(S_n)=2n-3$. Thus $\gamma(S_n)=\frac{n}{2n-3}.$

						\textbf{$(d)$} Let $(V_1,V_2)$ be the bipartition of $K_{m,n}$ with $\vert V_1\vert=m$ and $\vert V_2\vert=n$. Then, for any vertex $u\in V_1$, $\tr(u)=n+2m-2$. Similarly, for any vertex $v \in V_2$, $\tr(v)=m+2n-2$. since $m\geq n$, $\mathcal{D}_M(K_{m,n})=2m+n-2$. Hence the result follows.
					\end{proof}
					
				\end{enumerate}
			\end{cor}
			For trees, we make the interesting observation that the maximum transmission occurs only at the pendant vertices. Therefore, to compute $\gamma(T)$ for a tree $T$, it suffices to consider only the transmissions at the pendant vertices.
			\begin{theorem} \label{pendant}
				Let $T$ be a tree, and let $u \in V(T)$ be such that $\tr(u)=\mathcal{D}_M(T)$. Then $u$ is a pendant vertex.
			\end{theorem}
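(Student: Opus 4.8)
The goal is to show that in a tree, the maximum transmission is attained only at pendant (leaf) vertices. The plan is to argue by contradiction: suppose $u$ is an internal vertex (so $\deg(u) \geq 2$) with $\tr(u) = \mathcal{D}_M(T)$, and then exhibit a neighbor $w$ of $u$ with strictly larger transmission, contradicting maximality. The key tool is the well-known behavior of transmission along an edge of a tree. Because $T$ is a tree, removing an edge $uw$ splits $V(T)$ into exactly two parts: the set $A$ of vertices closer to $u$ (the component containing $u$) and the set $B$ of vertices closer to $w$ (the component containing $w$), with $A \cup B = V(T)$ and $|A| + |B| = n$. For every vertex $z$, its distance to $u$ and to $w$ differ by exactly $1$: if $z \in A$ then $d(z,w) = d(z,u) + 1$, and if $z \in B$ then $d(z,u) = d(z,w) + 1$.

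The main computation is the resulting difference formula. Summing the relation above over all $z \in V(T)$ yields
\begin{equation*}
\tr(w) - \tr(u) = |A| - |B|,
\end{equation*}
where $A$ is the side containing $u$ and $B$ is the side containing $w$ after deleting the edge $uw$. Thus $\tr(w) > \tr(u)$ precisely when $|A| > |B|$, i.e. when the $u$-side is the strictly larger side. So to win I must, for an internal vertex $u$, find an incident edge $uw$ for which the component containing $u$ has more than half the vertices.

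The central step, and the one I expect to be the main obstacle, is guaranteeing such an edge exists whenever $\deg(u) \geq 2$. The natural approach is to consider the components $T_1, \dots, T_{\deg(u)}$ obtained by deleting $u$ itself (one per neighbor $w_i$), with sizes $n_1, \dots, n_{\deg(u)}$ summing to $n-1$. For the edge $uw_i$, the side containing $w_i$ has exactly $n_i$ vertices, so the side containing $u$ has $n - n_i$ vertices; the desired strict inequality $\tr(w_i) > \tr(u)$ becomes $n - n_i > n_i$, i.e. $n_i < n/2$. Hence I only need \emph{some} neighbor branch with $n_i < n/2$. Since $\deg(u) \geq 2$, there are at least two branches with positive sizes summing to $n-1$, so at most one of them can have size $\geq n/2$; therefore at least one branch satisfies $n_i < n/2$ (strictly, using that the sizes are at least $1$ and there are $\geq 2$ of them, so $n_i \leq n-2 < n$, and a short counting argument rules out all branches being $\geq n/2$). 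Choosing $w = w_i$ for such a branch gives $\tr(w) > \tr(u)$, contradicting that $u$ maximizes transmission. The delicate point to state carefully is the boundary behavior when two branches are exactly balanced or when $n$ is small; I would handle this by noting that two branches each of size $\geq n/2$ would force $n_1 + n_2 \geq n > n-1 \geq n_1 + n_2$, an immediate contradiction, so the strict inequality $n_i < n/2$ is always available for at least one neighbor. This completes the contradiction and shows $u$ must be pendant.
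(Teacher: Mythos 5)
Your proposal is correct and follows essentially the same route as the paper: both argue by contradiction from $\deg(u)\geq 2$, use the fact that in a tree every vertex shifts its distance by exactly $\pm 1$ when moving from $u$ to a neighbour $w$, and conclude that the neighbour whose branch is on the smaller side of the edge has strictly larger transmission. The only cosmetic difference is that you phrase the size comparison as ``some branch has fewer than $n/2$ vertices,'' while the paper compares two branches directly and bounds $\tr(w)-\tr(u)\geq |S_v|-|S_w|+1$; these are the same counting argument.
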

			\begin{proof}
				Let $u \in V(T)$ be such that $\tr(u)=\mathcal{D}_M(T)$. Suppose that the vertex $u$ is not a pendant vertex. Let $\deg (u) \geq 2 $, and let $v$ and $w$ be two neighbours of $u$. Let $S_v$ and $S_w$ be the two branches rooted at $u$ containing the vertices $v$ and $w$, respectively. Without loss of generality,  assume that $\vert S_v \vert \geq \vert S_w \vert$.
				
				Let $p \in V(T)$ and $q \in V(T)$ belong to the branches $S_v$ and $S_w$, respectively. Then $d(w,p)=d(u,p)+1$ and $d(w,q)=d(u,q)-1$. Thus, 
				$$
				\tr(w)-\tr(u)\geq\vert S_v \vert - \vert S_w \vert+1 >0
				$$
				which is not possible, since $\tr(u)$ is maximum. Thus, $u$ must be a pendant vertex.
			\end{proof}
			Using Theorems \ref{main theorem} and \ref{pendant}, one can directly obtain the value of $\gamma(P_n)$, which was detailed in \cite{enide-geir-2024}.
			\begin{cor}
				Let $P_n$ denote the path graph on $n$ vertices. Then $\gamma(P_n)=\frac{2}{n-1}$.
			\end{cor}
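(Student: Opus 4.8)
The plan is to combine Theorem~\ref{general graph} with Theorem~\ref{pendant} and then carry out the transmission computation for the path $P_n$. First I would invoke Theorem~\ref{pendant}, which guarantees that the maximum transmission in a tree is attained at a pendant vertex. Since $P_n$ is a tree with exactly two pendant vertices (its two endpoints), it suffices to compute $\tr(u)$ where $u$ is an endpoint of the path; by symmetry the two endpoints have equal transmission, so there is really only one quantity to evaluate.

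\medskip

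\noindent The core computation is the transmission of an endpoint. Label the vertices of $P_n$ as $v_1, v_2, \dots, v_n$ along the path, and take $u = v_1$. Then $d(v_1, v_j) = j-1$ for each $j$, so
$$
\tr(v_1) = \sum_{j=1}^{n} (j-1) = \sum_{k=0}^{n-1} k = \frac{(n-1)n}{2}.
$$
By Theorem~\ref{pendant} this equals $\mathcal{D}_M(P_n)$, the maximum row sum of the distance matrix.

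\medskip

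\noindent Finally I would substitute into the formula of Theorem~\ref{general graph}:
$$
\gamma(P_n) = \frac{n}{\mathcal{D}_M(P_n)} = \frac{n}{\frac{(n-1)n}{2}} = \frac{2}{n-1},
$$
which is the claimed value. There is no real obstacle here: the only step requiring care is justifying that a pendant vertex genuinely maximizes the transmission, but that is exactly the content of Theorem~\ref{pendant}, which the corollary is permitted to cite. The remaining arithmetic is the elementary sum of the first $n-1$ nonnegative integers, so the argument is short and self-contained once the two preceding theorems are in hand.
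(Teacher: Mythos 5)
Your proposal is correct and follows exactly the paper's route: reduce to pendant vertices via Theorem~\ref{pendant}, compute the endpoint transmission $\tr(v_1)=\frac{n(n-1)}{2}$, and conclude via Theorem~\ref{general graph}. The paper's own proof is just a terser version of the same argument.
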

			\begin{proof}
				Every pendant vertex of $P_n$ has transmission $\frac{n(n-1)}{2}$. Thus $\gamma(P_n)=\frac{2}{n-1}$. 
			\end{proof}
			\section{Comparison with other parameters}\label{compari-other}
			In this section, we compare the parameter $\gamma (G)$ with other well-known graph invariants, such as the spectral radius of the distance matrix, the Wiener index, algebraic connectivity,  $b(G)$, and the Cheeger's constant.
			
			For a connected graph $G$, let $\partial_1 (G)$ denote the spectral radius of the associated distance matrix $\mathcal{D}(G)$ of $G$.
			\begin{theorem}\label{max eigenvalue}Let $G$ be a connected graph on $n$ vertices. Then
				$$
				\gamma(G) \leq \frac{n}{\partial_1 (G)},
				$$
				where the equality holds if and only if  $G$ is a transmission-regular graph.
			\end{theorem}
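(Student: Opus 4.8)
The plan is to reduce everything to the identity from Theorem \ref{general graph}, namely $\gamma(G)=\frac{n}{\mathcal{D}_M(G)}$, where $\mathcal{D}_M(G)$ is the maximum row sum of $D(G)$. Once this is in hand, the claimed inequality $\gamma(G)\leq \frac{n}{\partial_1(G)}$ is equivalent to the purely spectral statement $\partial_1(G)\leq \mathcal{D}_M(G)$, and the equality characterization becomes the assertion that $\partial_1(G)=\mathcal{D}_M(G)$ holds precisely when every row sum of $D(G)$ is the same, i.e.\ when $G$ is transmission-regular. So the entire problem is recast as the classical fact that the spectral radius of a nonnegative irreducible matrix is squeezed between its minimum and maximum row sums, with equality at the top iff all row sums agree.

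For the inequality itself, I would invoke the Perron--Frobenius setup recalled in Section \ref{sec_pre}: since $G$ is connected, $D(G)$ is nonnegative and irreducible, so $\partial_1(G)$ is a simple eigenvalue admitting a strictly positive eigenvector $x$ with $D(G)x=\partial_1(G)x$. Choosing an index $k$ with $x_k=\max_i x_i$, I would estimate
$$
\partial_1(G)\,x_k=\sum_{j}D_{kj}x_j\leq \sum_{j}D_{kj}x_k=\tr(k)\,x_k\leq \mathcal{D}_M(G)\,x_k,
$$
and divide by $x_k>0$ to conclude $\partial_1(G)\leq \mathcal{D}_M(G)$, hence $\gamma(G)\leq \frac{n}{\partial_1(G)}$.

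For the equality case, the forward direction is immediate: if $G$ is transmission-regular with common transmission $k$, then $D(G)\mathbf{1}=k\mathbf{1}$, and since $\mathbf{1}$ is a positive eigenvector of the irreducible nonnegative matrix $D(G)$, Perron--Frobenius forces $\partial_1(G)=k=\mathcal{D}_M(G)$, giving equality. The converse is where the real work lies, and it is the step I expect to be the main (though still mild) obstacle: assuming $\partial_1(G)=\mathcal{D}_M(G)$, the chain of inequalities above must collapse to equalities. In particular $\sum_j D_{kj}x_j=\sum_j D_{kj}x_k$ forces $D_{kj}(x_k-x_j)=0$ for every $j$; since $G$ is connected, $D_{kj}=d(k,j)>0$ for all $j\neq k$, so $x_j=x_k$ for all $j$, i.e.\ $x$ is a constant vector. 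Plugging a constant eigenvector back into $D(G)x=\partial_1(G)x$ shows every row sum equals $\partial_1(G)$, so $G$ is transmission-regular. The only care needed is to verify the strict positivity of the off-diagonal distance entries and of the Perron eigenvector, both of which follow directly from connectedness and the irreducibility of $D(G)$.
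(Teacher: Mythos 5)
Your proposal is correct and follows essentially the same route as the paper: both reduce the statement via Theorem \ref{general graph} to the classical fact that $\partial_1(G)\leq \mathcal{D}_M(G)$ with equality iff all row sums of $D(G)$ agree, and both handle the converse by taking a positive Perron eigenvector, evaluating the row at a maximal coordinate, and using connectedness (so $D_{kj}>0$ for $j\neq k$) to force the eigenvector to be constant. The only cosmetic differences are that you spell out the row-sum bound that the paper merely cites, and you justify the forward direction by Perron--Frobenius uniqueness of the positive eigenvector rather than by the max-row-sum bound; both are valid.
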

			\begin{proof}
				Since the largest eigenvalue of a nonnegative matrix is bounded above by the maximum row sum \cite{matrix-analysis}, the inequality follows from Theorem \ref{main theorem}. 
				
				Suppose $G$ is a transmission-regular graph. Then for every vertex $u$ in $G$, we have $\tr(u)=k$ for some $k\in \mathbb{N}$. This implies that all row sums of the distance matrix $\mathcal{D}(G)$ are equal to $k$, so the all-ones vector is an eigenvector of $\mathcal{D}(G)$ corresponding to the eigenvalue $k$. Since the largest eigenvalue is bounded above by $k$, $k$ should be the largest eigenvalue. Thus $k=\mathcal{D}_M(G)= \partial_1 (G)$.
				
				Conversely, assume that $\mathcal{D}_M(G)= \partial_1 (G)$. By Perron-Frobenius theorem, let $z = (z_i)$ be a positive eigenvector corresponding to the eigenvalue $\partial_1 (G)$. Let $z_k=\max\limits_{i \in V(G)} z_i$. Then for the $k$-th row in $\mathcal{D}(G)$, we have
				$$
				\partial_1(G) z_k=\sum\limits_{j=1}^n D_{kj}z_j \leq z_k(\sum\limits_{j=1}^n D_{kj}).
				$$
				Hence $\partial_1(G) \leq \sum\limits_{j=1}^n D_{kj} $. Since $\mathcal{D}_M(G)= \partial_1 (G)$, we  have $\partial_1 (G) = \sum\limits_{j=1}^n D_{kj}$. Thus $z_j=z_k$ for all $j \in \{1,2, \dots,n\}$. Hence, the all-ones vector is an eigenvector corresponding to the eigenvalue $\partial_1$, that is, all the row sums of $\mathcal{D}_M(G)$ are equal. So, $G$ is a transmission-regular graph.
			\end{proof}
			\begin{remark}
				A number of lower bounds for $\partial_1(G)$ have been extensively studied in the literature \cite{Distance-spectra}.  These findings can be used to derive further upper bounds for $\gamma(G)$ across various classes of graphs.
			\end{remark}

			The \textit{Wiener index} $W(G)$ of a graph $G$ is the sum of distances between all unordered pairs of vertices of $G$ \cite{Distance-spectra}. Note that the Wiener index $W(G)$ of a graph $G$ is half of the sum of transmissions of all the vertices of $G$.
			$$
			W(G)=\frac{\sum\limits_{u \in V(G)}\tr(u)}{2}.
			$$
			
			In the next theorem, we establish a relationship between $\gamma (G)$ and the Wiener index of the graph $G$. 
			\begin{theorem}\label{weiner-index-bound}
				Let $G$ be a connected graph on $n$ vertices. Then
				$$
				\gamma(G) \leq \frac{n^2}{2W(G)}.
				$$
				Furthermore, equality holds if and only if  $G$ is a transmission-regular graph.
			\end{theorem}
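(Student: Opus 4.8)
The plan is to reduce everything to Theorem \ref{general graph}, which already gives the clean identity $\gamma(G) = \dfrac{n}{\max_{u \in V(G)} \tr(u)}$, together with the definitional identity $W(G) = \tfrac{1}{2}\sum_{u \in V(G)} \tr(u)$ recorded just above the statement. Substituting the latter into the claimed right-hand side turns the target inequality into
$$
\frac{n}{\max_{u \in V(G)} \tr(u)} \;\leq\; \frac{n^2}{\sum_{u \in V(G)} \tr(u)}.
$$
Since $G$ is connected, every $\tr(u) > 0$, so both denominators are positive and I may cross-multiply and cancel one factor of $n$. This converts the problem into the elementary estimate $\sum_{u \in V(G)} \tr(u) \leq n \cdot \max_{u \in V(G)} \tr(u)$.

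The core step is then just the observation that a sum of $n$ nonnegative reals is at most $n$ times their maximum; equivalently, the average transmission never exceeds the maximum transmission. I would phrase this as: writing $M = \max_{u} \tr(u)$, each of the $n$ summands satisfies $\tr(u) \leq M$, so summing over all $u$ gives $\sum_u \tr(u) \leq nM$, which is exactly the reduced inequality. Unwinding the reduction recovers $\gamma(G) \leq \dfrac{n^2}{2W(G)}$.

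For the equality case, I would track when the single inequality $\sum_u \tr(u) \leq nM$ is tight. It is an equality precisely when each summand attains the maximum, i.e.\ $\tr(u) = M$ for every $u \in V(G)$; this says all vertices have equal transmission, which is the definition of a transmission-regular graph. Conversely, if $G$ is transmission-regular with common value $k$, then $\max_u \tr(u) = k$ and $\sum_u \tr(u) = nk$, so both sides equal $\dfrac{n}{k}$ and equality holds. I do not anticipate a genuine obstacle here: the only thing to be careful about is confirming positivity of the transmissions (guaranteed by connectedness and $n \geq 2$) so that the cross-multiplication and the equality bookkeeping are valid; the inequality itself is a one-line averaging argument riding on the already-established combinatorial formula for $\gamma(G)$.
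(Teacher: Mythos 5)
Your proposal is correct and is essentially the paper's own argument: both reduce the claim, via the formula $\gamma(G)=n/\max_u \tr(u)$ from Theorem \ref{general graph}, to the averaging inequality $\sum_u \tr(u) \leq n\max_u \tr(u)$, with equality precisely when all transmissions coincide (transmission-regularity). No gaps; nothing further is needed.
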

			\begin{proof} 
				Note that
				\begin{align*}
					W(G)&=\frac{\sum\limits_{u \in V(G)}\tr(u)}{2} \\ 
					&\leq  \frac{n}{2} \max\limits_{u \in V(G)}\tr(u).
				\end{align*}
				
				That is,  $\max\limits_{u \in V(G)}\tr(u) \geq \frac{2W(G)}{n}$. The result now follows directly from Theorem \ref{main theorem}.
				
				The equality holds if and only if all vertex transmissions are equal; that is, $G$ is a transmission-regular graph.
			\end{proof}
			
			Next, we provide a bound for $b(G)$ in terms of $\gamma (G)$.  
			\begin{theorem}
				Let $G$ be a graph with $m$ edges. Then $b(G) \leq \frac{m}{2}\gamma(G)$.
			\end{theorem}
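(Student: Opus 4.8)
The plan is to transform an optimal vector for the $\gamma$-problem into a feasible vector for the $b$-problem and then compare the two objective values. Let $x$ be an $l_\infty$-Fiedler vector of $G$, so that $\sum_{v \in V(G)} x_v = 0$, $\|x\|_\infty = 1$, and $\max_{uv \in E(G)}|x_u - x_v| = \gamma(G)$; such a vector exists by the compactness argument recalled after the definitions. Since $\|x\|_\infty = 1$ we have $x \neq 0$, so I would set $y = x/\|x\|_1$. Rescaling does not affect the zero-sum condition, so $y$ satisfies $\sum_v y_v = 0$ and $\|y\|_1 = 1$; hence $y$ is feasible for the minimization defining $b(G)$.

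Feeding $y$ into the definition of $b(G)$ and bounding each summand by the maximum over edges gives
\begin{equation*}
b(G) \le \sum_{uv \in E(G)} |y_u - y_v| = \frac{1}{\|x\|_1}\sum_{uv \in E(G)} |x_u - x_v| \le \frac{m}{\|x\|_1}\max_{uv \in E(G)}|x_u - x_v| = \frac{m\,\gamma(G)}{\|x\|_1},
\end{equation*}
where $m = |E(G)|$. It therefore remains only to show that $\|x\|_1 \ge 2$, which would immediately yield the claimed bound $b(G) \le \frac{m}{2}\gamma(G)$.

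The crux of the argument — though it is short — is the normalization inequality $\|x\|_1 \ge 2$ valid for every $x$ with $\sum_v x_v = 0$ and $\|x\|_\infty = 1$. Writing $P = \sum_{x_v > 0} x_v$ and $N = \sum_{x_v < 0} |x_v|$, the zero-sum condition forces $P = N$, so that $\|x\|_1 = P + N = 2P$. On the other hand, $\|x\|_\infty = 1$ guarantees that some coordinate has absolute value exactly $1$, and that coordinate contributes at least $1$ to either $P$ or $N$; combined with $P = N$ this yields $P \ge 1$ and hence $\|x\|_1 \ge 2$. Substituting into the displayed chain of inequalities completes the proof. I expect this final normalization fact to be the only genuinely load-bearing step: the rest is routine once one commits to rescaling the $l_\infty$-Fiedler vector to $l_1$-norm $1$ rather than constructing a fresh competitor for the $b$-problem.
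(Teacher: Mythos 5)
Your proposal is correct and follows essentially the same route as the paper: rescale the $l_\infty$-Fiedler vector to unit $l_1$-norm, bound each edge term by $\gamma(G)$, and use the normalization fact $\|x\|_1 \ge 2$ (which the paper obtains via $\|x\|_1 = 1 + \sum_{v\neq u}|x_v| \ge 1 + |\sum_{v\neq u}x_v| = 2$, while you argue through the equality of positive and negative mass — the same fact by a trivially different computation).
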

			\begin{proof}
				Let $x$ be an $l_{\infty}$-Fiedler vector of $G$. Let $u \in V(G)$ be such that $x_u=1$. Then
				$$
				\Vert x \Vert_1=1+\sum_{\substack{v \neq u \\ v\in V(G)}}\vert x_v\vert \geq 1+ \vert \sum_{\substack{v \neq u \\ v\in V(G)}}  x_v \vert=2.
				$$
				Now, define $x^{*} = \frac{x}{\Vert x \Vert _1}$. Then $\Vert x^*\Vert _1=1$. As $\sum\limits_{v \in V(G)}x_v=0$, so $ \sum\limits_{v \in V(G)}x^{*}_v=0$. Therefore,
				\begin{align*}
					b(G)&=\min\biggl\{\sum_{uv \in E(G)}\vert y_u - y_v \vert: \sum_{v \in V(G)} y_v =0 ~\mbox{{and}~} \Vert y \Vert_1=1\biggl\}\\
					&\leq \sum\limits_{uv \in E(G)}\vert x^*_u - x^*_v \vert \\
					&=\frac{1}{\Vert x \Vert_1}\sum_{uv \in E(G)} \vert x_u - x_v \vert\\
					&\leq \frac{1}{2}\sum_{uv \in E(G)}\vert x_u - x_v \vert \\
					&\leq \frac{m}{2}\gamma(G).
				\end{align*} 
			\end{proof}
			The following lemma will be useful in the proof of the next theorem. 
			\begin{lemma}\label{norm-2-bound}
				Let $G$ be a connected graph with $n \geq 2$ vertices. Let $x$ be an $l_\infty$-Fiedler vector of $G$. Then $\Vert x \Vert^2 _{2} \geq \frac{n}{n-1}$. Equality holds if and only if $G$ is isomorphic to the complete graph $K_n$.
			\end{lemma}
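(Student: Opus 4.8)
The plan is to derive the inequality directly from the Cauchy--Schwarz inequality, using only that $x \in \mathcal{F}$, and then to read off the equality case from the rigidity of Cauchy--Schwarz together with Theorem \ref{general graph}. Since $\Vert x \Vert_\infty = 1$, there is a vertex $u$ with $|x_u| = 1$; replacing $x$ by $-x$ if necessary (which is again an $l_\infty$-Fiedler vector, as both defining constraints and $\gamma_x(G)$ are invariant under negation), I may assume $x_u = 1$. The constraint $\sum_v x_v = 0$ then gives $\sum_{v \neq u} x_v = -1$, and applying Cauchy--Schwarz to the $n-1$ coordinates $v \neq u$ yields $\sum_{v \neq u} x_v^2 \geq \frac{1}{n-1}$. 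Adding $x_u^2 = 1$ gives $\Vert x \Vert_2^2 \geq 1 + \frac{1}{n-1} = \frac{n}{n-1}$, the claimed bound. Notably this step uses nothing beyond membership in $\mathcal{F}$.

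For the equality case, I first observe that equality in the Cauchy--Schwarz step forces $x_v = -\frac{1}{n-1}$ for every $v \neq u$. Thus an equality-attaining Fiedler vector must, up to sign and the choice of $u$, be the vector with one coordinate equal to $1$ and all others equal to $-\frac{1}{n-1}$ (for $n \geq 3$ only $u$ then has modulus $1$, so no ambiguity arises; the case $n=2$ is $K_2$). For such an $x$, every edge incident to $u$ contributes $|x_u - x_v| = \frac{n}{n-1}$, while every edge joining two vertices distinct from $u$ contributes $0$; since $G$ is connected, $u$ has a neighbour, so $\gamma_x(G) = \frac{n}{n-1}$. Because $x$ is an $l_\infty$-Fiedler vector, $\gamma(G) = \gamma_x(G) = \frac{n}{n-1}$, and Theorem \ref{general graph} converts this into $\mathcal{D}_M(G) = n-1$.

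To finish the forward direction I would invoke the elementary bound $\tr(w) = \sum_{v \neq w} d(w,v) \geq n-1$, valid for every vertex $w$ of a connected graph, with equality exactly when $w$ is adjacent to all other vertices. Since $\mathcal{D}_M(G) = \max_w \tr(w) = n-1$ while each $\tr(w) \geq n-1$, every vertex must have transmission exactly $n-1$, hence be adjacent to all others, so $G \cong K_n$. For the converse, when $G = K_n$ every pair of vertices is an edge, so $\gamma_x(K_n) = \max_a x_a - \min_b x_b = \frac{n}{n-1}$ by Corollary \ref{basic examples}. Writing $M = \max_a x_a$ and using $\Vert x \Vert_\infty = 1$, either $M = 1$ (so the minimum is $-\frac{1}{n-1}$) or the symmetric case holds; in the former, if $x_u = 1$ then $\sum_{v \neq u} x_v = -1$ with each term $\geq -\frac{1}{n-1}$, which forces every such entry to equal $-\frac{1}{n-1}$. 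Hence any Fiedler vector of $K_n$ has the special form, giving $\Vert x \Vert_2^2 = \frac{n}{n-1}$.

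I expect the Cauchy--Schwarz bound to be routine; the substance lies in the equality analysis, specifically in bridging from the algebraic rigidity of Cauchy--Schwarz to the combinatorial conclusion $G \cong K_n$. The cleanest route I see passes through Theorem \ref{general graph}, which turns the value $\gamma(G) = \frac{n}{n-1}$ into $\mathcal{D}_M(G) = n-1$. The main obstacle to argue carefully is that the equality vector is genuinely an $l_\infty$-Fiedler vector and not merely an element of $\mathcal{F}$ (this is exactly where $\gamma(G)=\gamma_x(G)$ is used), together with confirming that no non-standard Fiedler vector of $K_n$ escapes the special form; both points are handled by the sum-and-bound computation above.
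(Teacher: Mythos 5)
Your proof is correct and follows essentially the same route as the paper: Cauchy--Schwarz applied to the coordinates $v \neq u$ gives the bound, and the rigidity of the equality case, fed through Theorem \ref{general graph} and the observation that $\tr(w) \geq n-1$ for every vertex, forces $G \cong K_n$. You are in fact slightly more careful than the paper's own write-up, which skips the sign normalization $x_u = 1$ and does not explicitly verify the converse direction (that every $l_\infty$-Fiedler vector of $K_n$ attains equality).
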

			\begin{proof}
				Let $u \in V(G)$ such that $x_u=1$. By applying the Cauchy-Schwarz inequality, we obtain 
				$$
				(\sum_{v \neq u}x_v)^2 \leq (\sum_{v \neq u}1^2)(\sum_{v \neq u}x_v^2).
				$$
				Therefore, we have $\sum\limits_{v \neq u}x_v^2 \geq \frac{1}{n-1}$. Hence 
				$\Vert x \Vert^2 _{2} \geq \frac{n}{n-1}$. The equality holds if and only if $x_v=-\frac{1}{n-1}$ for all $v\neq u$. In this case, $\gamma(G)=\gamma_x(G)=\frac{n}{n-1}$. Then, by Theorem \ref{main theorem}, we have $\max\limits_{v \in V(G)}\tr(v)=n-1$. Since $\tr(v) \geq n-1$ for every $v \in V(G)$, we have $\tr(v) = n-1$ for all $v \in V(G)$. Hence, $G$ is isomorphic to $K_n$.
			\end{proof}
			
			Using the bound on the norm of $l_\infty$-Fiedler vector derived above, we now establish a relation between $\gamma (G)$ and the algebraic connectivity $a(G)$.
			\begin{theorem}\label{a(G)-bound}
				Let $G$ be a connected graph with $n\geq 3$ vertices and $m$ edges. Then $a(G) < \frac{m(n-1)}{n}\gamma(G)^2$.
			\end{theorem}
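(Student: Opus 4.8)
The plan is to use the variational characterization of $a(G)$ together with the explicit $l_\infty$-Fiedler vector and the norm bound from Lemma \ref{norm-2-bound}. Recall that
\begin{equation*}
a(G)=\min\biggl\{\sum_{uv \in E(G)}(y_u-y_v)^2 : \sum_{v \in V(G)} y_v = 0,\ \Vert y \Vert_2 = 1\biggl\}.
\end{equation*}
So to obtain an upper bound on $a(G)$, it suffices to exhibit a single feasible test vector and estimate its Rayleigh quotient.

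First I would take $x$ to be an $l_\infty$-Fiedler vector of $G$, which already satisfies $\sum_v x_v = 0$. Since $\Vert x\Vert_2 \neq 1$ in general, I would normalize by setting $y = x/\Vert x\Vert_2$; this $y$ is feasible for the $a(G)$ optimization problem. Plugging $y$ into the Rayleigh quotient gives
\begin{equation*}
a(G) \leq \frac{\sum_{uv \in E(G)}(x_u-x_v)^2}{\Vert x\Vert_2^2}.
\end{equation*}
Next I would bound the numerator: by the definition of $\gamma(G)$, each edge contributes $(x_u-x_v)^2 \leq \gamma(G)^2$, and there are $m$ edges, so $\sum_{uv\in E(G)}(x_u-x_v)^2 \leq m\,\gamma(G)^2$. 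For the denominator, Lemma \ref{norm-2-bound} gives $\Vert x\Vert_2^2 \geq \frac{n}{n-1}$, hence $\frac{1}{\Vert x\Vert_2^2} \leq \frac{n-1}{n}$. Combining these yields $a(G) \leq \frac{m(n-1)}{n}\gamma(G)^2$.

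The one remaining issue is upgrading the non-strict inequality to the strict inequality $a(G) < \frac{m(n-1)}{n}\gamma(G)^2$ claimed in the theorem. I expect this to be the main (though minor) obstacle: the two bounds I used cannot be simultaneously tight. If the denominator bound $\Vert x\Vert_2^2 = \frac{n}{n-1}$ holds with equality, then by Lemma \ref{norm-2-bound} the graph must be $K_n$, for which $x$ has one coordinate equal to $1$ and all others equal to $-\frac{1}{n-1}$; I would then check directly that the edge differences are \emph{not} all equal to $\gamma(K_n)$, so the numerator bound $\sum (x_u-x_v)^2 = m\gamma(G)^2$ is strict. Conversely, if the numerator bound is tight (every edge difference equals $\pm\gamma(G)$ in absolute value), I would argue the denominator bound must then be strict. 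Since at most one of the two estimates can be an equality, at least one is strict, forcing the overall inequality to be strict. Alternatively, and more cleanly, I would note that $a(G)$ is the minimum over all feasible vectors while $y$ is merely one particular feasible vector, and argue that $y$ is never an eigenvector for $a(G)$ (it need not be orthogonal to the eigenspaces above $a(G)$ in the required way unless $G$ is very special), which already gives strictness whenever the Rayleigh quotient of $y$ strictly exceeds $a(G)$; however, the cleanest route is the case analysis on equality in the two bounds.
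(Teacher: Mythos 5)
Your proposal follows essentially the same route as the paper: bound $a(G)$ by the Rayleigh quotient of the $l_\infty$-Fiedler vector, estimate the numerator by $m\,\gamma(G)^2$ edge by edge, and control the denominator via Lemma \ref{norm-2-bound}. The only real difference is how strictness is obtained: the paper isolates $G=K_n$ as a separate case (computing $a(K_n)=n<\tfrac{n^2}{2}$ directly) and for $G\neq K_n$ invokes the \emph{strict} inequality $\Vert x\Vert_2^2>\tfrac{n}{n-1}$ from the equality case of Lemma \ref{norm-2-bound}, whereas you argue that the numerator and denominator bounds cannot both be tight. Your dichotomy does work for $n\geq 3$ (equality in the denominator forces $G=K_n$ with $x_v=-\tfrac{1}{n-1}$ for $v\neq u$, and then the edges not incident to $u$ contribute $0$ rather than $\gamma(K_n)^2$), but be aware that for $K_2$ both bounds \emph{are} simultaneously tight and in fact $a(K_2)=2=\tfrac{m(n-1)}{n}\gamma(K_2)^2$, so the strict inequality fails there; this is a defect of the theorem as stated (and of the paper's own ``$n<\tfrac{n^2}{2}$ for $n\geq 2$'' claim), not something your argument could repair, and it should be excluded by assuming $n\geq 3$.
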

			\begin{proof}
				If $G=K_n$, we have $a(K_n)=n$ \cite{alg-con}. We know $\gamma(K_n)=\frac{n}{n-1}$. Hence,  for $n \geq 3$, we have 
				$a(K_n)=n<\frac{n^2}{2}=\frac{m(n-1)}{n}\gamma(K_n)^2$.
				
				Let $G \neq K_n$. Let $x$ be an $l_\infty$-Fiedler vector of $G$. Observe that, if $vw\in E(G)$, then $\vert x_v - x_w\vert \leq \gamma(G) $. Now, by the variational characterization of  the second smallest eigenvalue\cite{matrix-analysis}, we have
				\begin{align*}
					a(G) &\leq \frac{x^T L(G) x}{x^Tx}\\&= \frac {\sum\limits_{vw \in E(G)} (x_v-x_w)^2}{\vert \vert x \vert \vert^2 _{2}}\\ &< \frac{m(n-1)}{n}\gamma(G)^2,
				\end{align*}
				
				where the last inequality follows from Lemma \ref{norm-2-bound}.
			\end{proof}
			Let $S \subseteq V(G)$. The volume of $S$, denoted by $\vol(S)$, is the sum of the degrees of all vertices in $S$, that is 
			$\vol(S)=\sum\limits_{u \in S} \deg(u).$  For a proper subset $S \subsetneq  V(G)$, the boundary of $S$, denoted by $ \partial S$, is the set of all edges with exactly one endpoint in $S$. Formally 
			$$\partial S = \biggl\{uv \in E(G) : u \in S ~\mbox{and}~v \in S^c\biggl\}.$$ 
			The following definitions and results are from \cite{chung-book-1997}. For a subset $S \subsetneq V(G)$, $h_G(S)$ is defined as:
			$$
			h_G(S)=\frac{\vert \partial S \vert}{\min(\vol(S),\vol(S^c))},
			$$
			where $S^c = V(G) \setminus S.$
			The Cheeger constant $h_G$ of a graph $G$ is given by 
			$$
			h_G= \min\limits_{S \subsetneq V(G)} h_G(S).
			$$
			Let $G$ be a graph on $n$ vertices. Then the normalized Laplacian of $G$, denoted by $\mathcal{L}(G)$, is given by
			
			$$
			\mathcal{L}_{uv}(G) =
			\begin{cases}
				1 & \text{if } u = v \text{ and } \deg(v) \neq 0, \\
				-\dfrac{1}{\sqrt{\deg(u) \deg(v)}} & \text{if u and v are adjacent} , \\
				0 & \text{otherwise}.
			\end{cases}
			$$
			In \cite{chung-book-1997}, the author defined $\mathcal{L}(G)$ to be the Laplacian matrix associated with the graph $G$. If $G$ is connected graph, then we have $\mathcal{L}(G)=\Delta(G)^{\frac{-1}{2}}L(G)\Delta(G)^{\frac{-1}{2}}$ . The spectrum of $\mathcal{L}(G)$ is denoted by $\{ \mu_1 (G),\mu_2 (G),\dots,\mu_n (G)\}$, where the eigenvalues are arranged in non-increasing order: $\mu_1(G) \geq \mu_2(G)\geq \dots \geq \mu_n(G)$.
			The  Cheeger inequality is given by 
			$$
			2h_G \geq \mu_{n-1}(G) > \frac{h_G^2}{2}.
			$$
			\begin{theorem}
				Let $G$ be a connected $k$-regular graph on $n\geq 3$ vertices. Then $h_G < \sqrt{n-1} \gamma(G)$.
			\end{theorem}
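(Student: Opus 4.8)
The plan is to combine the Cheeger inequality stated above with the bound on the algebraic connectivity from Theorem \ref{a(G)-bound}, using the fact that for a $k$-regular graph the normalized Laplacian is merely a scalar multiple of the ordinary Laplacian. First I would invoke the right-hand half of the Cheeger inequality, namely $\mu_{n-1}(G) > \frac{h_G^2}{2}$, which rearranges to $h_G^2 < 2\mu_{n-1}(G)$. This isolates $h_G$ in terms of the second smallest eigenvalue of the normalized Laplacian.

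Next, I would exploit regularity to convert the normalized-Laplacian eigenvalue into an ordinary-Laplacian one. Since $G$ is $k$-regular, $\Delta(G) = kI$, so $\mathcal{L}(G) = \Delta(G)^{-1/2}L(G)\Delta(G)^{-1/2} = \frac{1}{k}L(G)$. Consequently the spectrum of $\mathcal{L}(G)$ is exactly the spectrum of $L(G)$ scaled by $\frac{1}{k}$, and because the eigenvalues are listed in non-increasing order, the second smallest eigenvalue of $\mathcal{L}(G)$ satisfies $\mu_{n-1}(G) = \frac{a(G)}{k}$, where $a(G)$ is the algebraic connectivity of $G$. Substituting this into the previous step gives $h_G^2 < \frac{2a(G)}{k}$.

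Finally, I would apply Theorem \ref{a(G)-bound}. For a $k$-regular graph the number of edges is $m = \frac{nk}{2}$, so the theorem yields $a(G) < \frac{m(n-1)}{n}\gamma(G)^2 = \frac{k(n-1)}{2}\gamma(G)^2$. Plugging this into the inequality above gives $h_G^2 < \frac{2}{k}\cdot\frac{k(n-1)}{2}\gamma(G)^2 = (n-1)\gamma(G)^2$, and taking square roots produces $h_G < \sqrt{n-1}\,\gamma(G)$, as required. The strict inequality is inherited directly from the strict bound in Theorem \ref{a(G)-bound}.

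I do not anticipate a serious obstacle here, since every ingredient is already available; the whole argument is a three-step chain. The only points requiring care are the bookkeeping for the eigenvalue indexing (confirming $\mu_{n-1}(G) = a(G)/k$ under the non-increasing ordering convention, where $\mu_n(G)=0$ for connected $G$) and checking that the edge count $m = \frac{nk}{2}$ is substituted correctly so that the factor of $k$ cancels cleanly, leaving precisely $(n-1)\gamma(G)^2$.
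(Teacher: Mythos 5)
Your proof is correct and follows exactly the same route as the paper's: use $\mathcal{L}(G)=\frac{1}{k}L(G)$ for a $k$-regular graph to get $\mu_{n-1}(G)=a(G)/k$, substitute $m=kn/2$ into Theorem \ref{a(G)-bound} to obtain $\mu_{n-1}(G)<\frac{n-1}{2}\gamma(G)^2$, and finish with the Cheeger inequality. You have simply written out the cancellations more explicitly than the paper does.
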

			\begin{proof}
				Let $G$ be $k$-regular graph. Then $\mathcal{L}(G)=\frac{1}{k}L(G)$ and  $\mu_{n-1}(G)=\frac{a(G)}{k}$. Since $\vert E(G) \vert =\frac{kn}{2}$, from Theorem \ref{a(G)-bound}, we have $\mu_{n-1}(G) < \frac{n-1}{2} \gamma(G)^2$. The result follows from the Cheeger inequality.
			\end{proof}

			Note that if $H$ is a graph obtained by adding an edge to the graph $G$, then $\gamma (H) \geq \gamma (G)$. To see this, let $x \in \mathcal{F}$. Then $\gamma_x(H) \geq \gamma_x(G)$. Taking the minimum over all $x \in \mathcal{F}$ on both sides yields the desired inequality.
			
			Next,  we establish bounds on the quantity $\gamma(G)$ in terms of the number of vertices and provide a complete characterization of the extremal graphs.
			\begin{theorem}\label{global extremizer}
				Let $G$ be a connected graph on $n$ vertices. Then
				$$\frac{2}{n-1} \leq \gamma(G) \leq \frac{n}{n-1}.$$
				
				Furthermore,  equality holds on the left-hand side if and only if  $G$ is isomorphic to $P_n$, and equality holds on the right-hand side if and only if  $G$ is isomorphic to $K_n$.  
			\end{theorem}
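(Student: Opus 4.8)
The plan is to reduce both inequalities to the combinatorial identity $\gamma(G) = n/\mathcal{D}_M(G)$ of Theorem \ref{general graph}, so that the two-sided bound on $\gamma(G)$ becomes the single estimate
$$n-1 \le \mathcal{D}_M(G) = \max_{u \in V(G)} \tr(u) \le \binom{n}{2}$$
on the maximum transmission, with the inequalities reversed once $n$ is divided by them. The upper bound $n/(n-1)$ and lower bound $2/(n-1)$ for $\gamma(G)$ then correspond, respectively, to the lower bound $n-1$ and upper bound $\binom{n}{2}$ for $\mathcal{D}_M(G)$.

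For the right-hand inequality $\gamma(G) \le n/(n-1)$ I would argue as follows: since $G$ is connected, every vertex $v \neq u$ satisfies $d(u,v) \ge 1$, whence $\tr(u) = \sum_{v \neq u} d(u,v) \ge n-1$ for all $u$, giving $\mathcal{D}_M(G) \ge n-1$. Equality forces $\tr(u) = n-1$ at the maximizing vertex, and because $\tr(v) \ge n-1$ holds for every $v$, this forces $d(u,v)=1$ for all pairs, i.e.\ $G \cong K_n$; this is the same equality analysis already carried out in Lemma \ref{norm-2-bound}.

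The substantive step is the left-hand inequality, equivalently $\tr(u) \le \binom{n}{2}$ for every $u$. The key observation is that in a connected graph the distances from a fixed vertex $u$ occupy a contiguous block of integers: if some vertex lies at distance $r$, then each value $1,2,\dots,r$ is also realized, since a shortest $u$-path meets a vertex at every intermediate distance. Listing the distances $d(u,v)$ over $v \neq u$ in nondecreasing order as $d_1 \le d_2 \le \cdots \le d_{n-1}$, I claim $d_i \le i$: if instead $d_i \ge i+1$, then the $n-i$ vertices with indices $i,\dots,n-1$ all have distance $\ge i+1$, leaving at most $i-1$ vertices at distance $\le i$, yet each of the distances $1,\dots,i$ must be hit at least once, forcing at least $i$ such vertices, a contradiction. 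Summing gives $\tr(u) = \sum_{i=1}^{n-1} d_i \le \sum_{i=1}^{n-1} i = \binom{n}{2}$, hence $\gamma(G) \ge n/\binom{n}{2} = 2/(n-1)$.

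Finally, for the equality $\gamma(G) = 2/(n-1)$ I would trace back this chain: it requires $d_i = i$ for all $i$ at the maximizing vertex $u$, i.e.\ exactly one vertex $v_r$ at each distance $r = 1,\dots,n-1$. Each such $v_r$ must have a neighbour at distance $r-1$, which can only be $v_{r-1}$, producing the spanning path $u - v_1 - \cdots - v_{n-1}$; any extra edge $v_iv_j$ with $|i-j|\ge 2$ (or $uv_j$ with $j \ge 2$) would shortcut a distance and violate the one-per-level property, so $G \cong P_n$, while the converse direction is just the already recorded value $\gamma(P_n) = 2/(n-1)$. The hard part will be exactly this left-hand bound together with its equality case: the contiguous-range property and the sorted-distance counting are what make the transmission maximal precisely for the path, and the uniqueness-of-levels argument is what rigidly pins the extremal graph down to $P_n$ rather than merely some graph admitting a Hamiltonian path.
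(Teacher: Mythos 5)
Your proof is correct and follows essentially the same route as the paper: both reduce the statement via Theorem \ref{general graph} to the two-sided bound $n-1 \le \mathcal{D}_M(G) \le \binom{n}{2}$ on the maximum transmission. The only differences are cosmetic --- the paper obtains the upper bound on $\gamma$ from spanning-subgraph monotonicity ($\gamma(G) \le \gamma(K_n)$) rather than directly from $\tr(u) \ge n-1$, and it merely asserts $\tr(u) \le n(n-1)/2$ and disposes of the $P_n$ equality case via the diameter, whereas you supply the sorted-distance counting and the one-vertex-per-level rigidity argument in full.
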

			\begin{proof}
				If $H$ is a spanning subgraph of a graph $G$, then $\gamma (H) \leq \gamma (G)$. Thus $$\gamma(G) \leq \gamma(K_n) = \frac{n}{n-1}.$$
				
				The upper bound is attained if and only if $\max\limits_{u\in V(G)} \tr(u)=n-1$. Since $\tr(u) \geq n-1$ for every $u \in V(G)$, this implies that $\tr(u) = n-1$ for all $u \in V(G)$. Hence, $G \cong K_n$.\\
				Note that for any vertex $u \in V(G)$,
				$$
				\tr(u) \leq \frac{n(n-1)}{2}.
				$$
				Thus, $$\max\limits_{u \in V(G)} \tr(u) \leq \frac{n(n-1)}{2},$$ which implies  $$\gamma(G) \geq \frac{2}{n-1}.$$
				For equality to hold, the diameter of the graph $G$ should be $n-1$. Hence, $P_n$ is the only graph that attains the lower bound.
			\end{proof}
			
			\begin{theorem}
				Let $T$ be a tree on $n(\geq 3)$ vertices.
				Then 
				$$\frac{2}{n-1} \leq \gamma(T) \leq \frac{n}{2n-3}.$$
				
				Furthermore,  equality holds on the left-hand side if and only if  $G$ is isomorphic to $P_n$, and equality holds on the right-hand side if and only if  $G$ is isomorphic to $S_n$.
			\end{theorem}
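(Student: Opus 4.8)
The plan is to reformulate both inequalities in terms of the maximum transmission. By Theorem~\ref{general graph}, $\gamma(T) = \frac{n}{\max_{u} \tr(u)}$, so the claimed bounds $\frac{2}{n-1} \le \gamma(T) \le \frac{n}{2n-3}$ are equivalent to
\[
2n-3 \;\le\; \max_{u \in V(T)} \tr(u) \;\le\; \frac{n(n-1)}{2}.
\]
The right inequality here, together with its equality case $T \cong P_n$, is exactly the content of the lower bound of Theorem~\ref{global extremizer} specialized to trees: every vertex of any connected graph on $n$ vertices satisfies $\tr(u) \le \binom{n}{2}$, and equality in the maximum forces $\diam = n-1$, hence a path. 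Since $P_n$ is a tree, the left-hand inequality of the present statement and its equality characterization transfer verbatim, so the genuinely new work is the lower bound on the maximum transmission.

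For that, I would first invoke Theorem~\ref{pendant}, which guarantees that the maximum transmission of a tree is attained at a pendant vertex, so it suffices to bound leaf transmissions from below. The key step is a formula relating a leaf's transmission to that of its unique neighbor: if $v$ is a leaf with neighbor $w$, then every $u \ne v$ lies on the $w$-side of $v$, so $d(v,u) = d(w,u) + 1$; summing over all $u \ne v$ and using $\sum_{u \ne v} d(w,u) = \tr(w) - 1$ gives
\[
\tr(v) = \tr(w) + (n-2).
\]
Combining this with the elementary fact that $\tr(w) \ge n-1$ for every vertex of a connected graph yields $\tr(v) \ge 2n-3$, hence $\max_{u} \tr(u) \ge 2n-3$ and the desired upper bound $\gamma(T) \le \frac{n}{2n-3}$.

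For the equality characterization I would determine when $\max_u \tr(u) = 2n-3$. Applying the leaf formula to an arbitrary leaf $v$ with neighbor $w$ forces $\tr(w) \le n-1$, hence $\tr(w) = n-1$, which means $w$ is adjacent to every other vertex; a universal vertex in a tree already accounts for all $n-1$ edges, so all remaining vertices are leaves and $T \cong S_n$. The converse direction is immediate from Corollary~\ref{star graph}. The main obstacle I anticipate is making this equality argument airtight: I must apply the extremal condition to \emph{every} leaf rather than a single one, and use acyclicity to rule out two distinct universal vertices (which for $n \ge 3$ would create a triangle), so that the star structure is genuinely pinned down.
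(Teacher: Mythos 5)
Your proposal is correct and follows essentially the same route as the paper: both reduce to showing $\max_u \tr(u) \ge 2n-3$ via Theorem \ref{pendant} and a count of distances from a pendant vertex (your identity $\tr(v)=\tr(w)+(n-2)$ is just a repackaging of the paper's observation that all $n-2$ non-neighbors of a leaf lie at distance at least $2$), and both pin down equality by forcing the leaf's neighbor to be a universal vertex, i.e.\ $T\cong S_n$.
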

			\begin{proof}
				The left-hand side of the inequality follows from Theorem \ref{global extremizer}. By Corollary \ref{star graph}, we have $\mathcal{D}_M(S_n)=2n-3$. We now show that for any tree on $n$ vertices, $\mathcal{D}_M(T) \geq 2n-3$. 
				
				Let $u \in V(T)$ be such that $\tr(u)=\mathcal{D}_M(T)$. Then, by Theorem \ref{pendant}, the vertex $u$ should be a pendant vertex. So, the vertex $u$ has exactly one neighbor, and the other vertices of $T$ are at a distance of at least $2$ from $u$. Hence
				$$
				\mathcal{D}_M(T)=\tr(u)\geq 2n-3.
				$$
				This gives the upper bound: $$\gamma (T) \leq \frac{n}{2n-3}.$$
				To get equality in the above expression, we must have the diameter of the tree $T$ equal to $2$. It is easy to verify that the star graph $S_n$ is the unique tree with diameter $2$. Hence, the star graph is the unique extermal graph.
			\end{proof}
			\section{Cartesian product}\label{sec-product}

			In this section, we derive a formula for $\gamma(G)$ when $G$ is the Cartesian product of some finite families of graphs, expressing it in terms of the corresponding values for their constituent graphs. As a by-product of this formula, we obtain explicit values of $\gamma(G)$ for several well-known families of graphs.
			
			The \textit{Cartesian product} of two graphs $G$ and $H$, denoted by $G\times H$, is the graph with the vertex set $V(G) \times V(H)$, and edges defined as follows: $(u_1,v_1) \sim (u_2,v_2)$ if either $u_1 \sim u_2$ in $G$ and $v_1=v_2$, or $u_1=u_2$ and $v_1 \sim v_2$ in $H$ \cite{hammack-product-graph}.

			\begin{theorem}{\label{graph products}}
				
				Let $G$ and $H$ be two connected graphs. Then 
				$$
				\gamma(G \times H)=\frac{1}{\frac{1}{\gamma(G)}+\frac{1}{\gamma(H)}}.
				$$
			\end{theorem}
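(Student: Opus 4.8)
The plan is to reduce everything to the combinatorial formula of Theorem \ref{general graph}, which expresses $\gamma$ purely in terms of the number of vertices and the maximum transmission. Writing $n_G=|V(G)|$ and $n_H=|V(H)|$, the product graph $G\times H$ has $n_G n_H$ vertices, so by Theorem \ref{general graph} it suffices to determine $\max_{(u,v)}\tr\big((u,v)\big)$ in $G\times H$ and substitute.

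First I would record the standard distance formula for the Cartesian product, namely
$$
d_{G\times H}\big((u_1,v_1),(u_2,v_2)\big)=d_G(u_1,u_2)+d_H(v_1,v_2).
$$
This is where connectedness of both factors is used: it guarantees that $G\times H$ is connected and that all distances are finite, so that Theorem \ref{general graph} applies. Using additivity of distance, I would compute the transmission of a vertex $(u,v)$ by summing over all $(u',v')\in V(G)\times V(H)$ and splitting the double sum, which yields
$$
\tr\big((u,v)\big)=\sum_{u'}\sum_{v'}\big(d_G(u,u')+d_H(v,v')\big)=n_H\,\tr(u)+n_G\,\tr(v).
$$

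The next step is the observation that since $u$ and $v$ range over $V(G)$ and $V(H)$ independently, the maximum of this sum separates:
$$
\max_{(u,v)}\tr\big((u,v)\big)=n_H\,\mathcal{D}_M(G)+n_G\,\mathcal{D}_M(H).
$$
Substituting into Theorem \ref{general graph} gives $\gamma(G\times H)=\dfrac{n_G n_H}{n_H\,\mathcal{D}_M(G)+n_G\,\mathcal{D}_M(H)}$. Finally I would rewrite $\mathcal{D}_M(G)=n_G/\gamma(G)$ and $\mathcal{D}_M(H)=n_H/\gamma(H)$ (again via Theorem \ref{general graph}) and simplify: after dividing numerator and denominator by $n_G n_H$, the denominator becomes exactly $\tfrac{1}{\gamma(G)}+\tfrac{1}{\gamma(H)}$, producing the claimed harmonic-type expression.

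I do not expect a serious obstacle here; the argument is a clean computation driven entirely by the additivity of distances in the Cartesian product. The only points requiring a little care are the independent-maximization step (the maximum of $n_H\tr(u)+n_G\tr(v)$ over the product vertex set equals the sum of the individual maxima precisely because the two summands depend on disjoint coordinates) and, for a self-contained treatment, a one-line justification of the distance formula. Once the two-factor identity is established, a straightforward induction extends it to the Cartesian product of finitely many graphs, as promised in the introduction to this section.
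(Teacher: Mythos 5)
Your proposal is correct and follows essentially the same route as the paper: the additivity of distances in the Cartesian product, the resulting identity $\tr\bigl((u,v)\bigr)=n_H\,\tr(u)+n_G\,\tr(v)$, independent maximization over the two coordinates, and substitution into Theorem \ref{general graph}. No gaps; the separate-maximization point you flag is exactly the step the paper also relies on.
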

			\begin{proof}
				Let $G$ and $H$ be graphs of order $n$ and $m$,  respectively. Let $\mathcal{D}_M(G)=t_1$, $\mathcal{D}_M(H)=t_2$. Then $\gamma(G)=n/t_1$ and $\gamma(H)=m/t_2$. It is known that \cite{hammack-product-graph}, for any two vertices $(u_1,v_1)$ and $(u_2,v_2)$ in $G\times H$, 
				
				$$
				d_{G\times H}\{(u_1,v_1),(u_2,v_2)\}=d_G(u_1,u_2)+d_H(v_1,v_2).
				$$
				As a consequence, it is easy to see that the transmission of a vertex $(u,v)$ in $G\times H$ is
				\begin{align*}
					\tr_{G\times H}(u,v)&=m\tr_G(u)+n\tr_H(v).
				\end{align*}
				
				To maximize this over all $(u,v)\in V(G\times H)$, we pick $u \in V(G)$ with transmission $t_1$ and $v \in V(H)$ with transmission $t_2$. Hence,  $\mathcal{D}_M(G \times H)=mt_1+nt_2$. Therefore, by Theorem \ref{main theorem},
				$$
				\gamma(G\times H)=\frac{mn}{mt_1+nt_2}=\frac{1}{\frac{t_1}{n}+\frac{t_2}{m}}=\frac{1}{\frac{1}{\gamma(G)}+\frac{1}{\gamma(H)}}.
				$$
			\end{proof}

			Similarly, the Cartesian product of the graphs $G_1,G_2,\dots,G_k$, denoted by $G_1 \times G_2 \times \dots \times G_k$, is the graph with the vertex set $V(G_1) \times V(G_2) \times \dots \times V(G_k)$, and the edge set defined as follows: $(u_1,u_2,\dots,u_k) \sim (v_1,v_2,\dots,v_k)$ if there exists exactly one index $i \in \{1,2,\dots,k\}$ such that $u_i \sim v_i$ in $G_i$, and $u_j=v_j$ for all $j \neq i$.

			Building on the proof of the above theorem, we can extend the argument to establish the following result for a finite Cartesian product of graphs.
			
			\begin{theorem}
				Let $G_1, G_2,\dots, G_k$ be a collection of connected graphs. Then
				$$
				\gamma(G_1 \times G_2 \times \dots \times G_k)=\frac{1}{\frac{1}{\gamma(G_1)}+\frac{1}{\gamma(G_2)}+\dots +\frac{1}{\gamma(G_k)}}.
				$$
			\end{theorem}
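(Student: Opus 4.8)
The plan is to proceed by induction on $k$, leaning on the two-factor formula of Theorem \ref{graph products} together with the associativity of the Cartesian product. The base case $k=2$ is exactly Theorem \ref{graph products} (and $k=1$ is trivial). For the inductive step I would identify the $k$-fold product with an iterated binary product, namely
$$
G_1 \times G_2 \times \dots \times G_k \;\cong\; (G_1 \times \dots \times G_{k-1}) \times G_k ,
$$
and then apply the two-factor theorem to the outer product. Since the defining optimization for $\gamma$ depends only on the abstract graph, $\gamma$ is an isomorphism invariant, so this identification is harmless.

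Before invoking Theorem \ref{graph products} (and, underneath it, Theorem \ref{general graph}), I must ensure that both factors $G_1 \times \dots \times G_{k-1}$ and $G_k$ are connected, since those results assume connectedness. This follows from the standard fact that a Cartesian product of connected graphs is connected: a walk from $(u_1,\dots,u_{k-1})$ to $(v_1,\dots,v_{k-1})$ is obtained by changing one coordinate at a time along a path in the corresponding factor. Granting connectedness, Theorem \ref{graph products} gives
$$
\gamma\big((G_1 \times \dots \times G_{k-1}) \times G_k\big) = \frac{1}{\frac{1}{\gamma(G_1 \times \dots \times G_{k-1})} + \frac{1}{\gamma(G_k)}} .
$$

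Applying the induction hypothesis to the connected graph $G_1 \times \dots \times G_{k-1}$ yields $\frac{1}{\gamma(G_1 \times \dots \times G_{k-1})} = \sum_{i=1}^{k-1}\frac{1}{\gamma(G_i)}$. Substituting this into the displayed identity collapses the denominator to $\sum_{i=1}^{k}\frac{1}{\gamma(G_i)}$, which is precisely the claimed formula, completing the induction. As an alternative, one can give a direct proof mirroring Theorem \ref{graph products}: writing $n_j = |V(G_j)|$ and using $d\big((u_1,\dots,u_k),(v_1,\dots,v_k)\big) = \sum_i d_{G_i}(u_i,v_i)$, the transmission factorizes as $\tr(u_1,\dots,u_k) = \sum_i \big(\prod_{j\neq i} n_j\big)\,\tr_{G_i}(u_i)$, so the maximum row sum is attained by maximizing each coordinate independently, and Theorem \ref{general graph} finishes the computation.

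I do not expect a genuine obstacle here, as the argument is essentially bookkeeping once Theorem \ref{graph products} is in hand. The one point that requires care, and which I would flag as the main thing to verify rather than a deep difficulty, is the propagation of the connectedness hypothesis through the induction together with the identification of the $k$-fold product with the iterated binary product; both are routine but are precisely where the hypotheses of the earlier theorems enter.
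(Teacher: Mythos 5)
Your proposal is correct. The paper itself gives no separate proof for the $k$-fold case; it simply remarks that the direct transmission computation from Theorem \ref{graph products} extends, i.e.\ it takes the route you sketch as your ``alternative'': $d\big((u_1,\dots,u_k),(v_1,\dots,v_k)\big)=\sum_i d_{G_i}(u_i,v_i)$, hence $\tr(u_1,\dots,u_k)=\sum_i \bigl(\prod_{j\neq i} n_j\bigr)\tr_{G_i}(u_i)$, the maximum is attained coordinatewise, and Theorem \ref{general graph} finishes. Your primary route --- induction on $k$ via associativity of the Cartesian product and the two-factor formula --- is a mild but genuine variant: it buys you the convenience of never recomputing distances in the $k$-fold product, at the cost of having to check that $\gamma$ is an isomorphism invariant and that the $(k-1)$-fold product is connected, both of which you correctly identify and dispose of. The direct route the paper intends avoids those bookkeeping checks entirely and makes the separability of the transmission explicit. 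Either argument is complete; there is no gap.
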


			Using the above theorem, we can compute the explicit value of $\gamma (G)$ for several well-known families of graphs. We present a few of them here.  
			
			\begin{enumerate}
				\item [(a)] The $t$-dimensional hypercube $Q_t$ is the Cartesian product of $t$-copies of the complete graph $K_2$. Since $\gamma(K_2)=2$, we have $\gamma(Q_t)=\frac{2}{t}$. 
				\item [(b)] The Hamming graph $H(t,s)$ is the Cartesian product of $t$-copies of the complete graph $K_s$. Hence $\gamma(H(t,s))=\frac{s}{t(s-1)}.$
				\item [(c)] The grid graph $G_{l,m,n}$ is the Cartesian product of paths $P_l$, $P_m$ and $P_n$.
				Now, by Theorem \ref{global extremizer}, we obtain $\gamma(G_{l,m,n})=\frac{2}{l+m+n-3}$.
				
				\item[(d)]  The torus grid graph $T_{m,n}$ is the Cartesian product of two        cycle graphs $C_m$ and $C_n$. By Corollary \ref{basic examples}, we have   $\gamma(C_n)=\frac{n}{\lfloor \frac{n^2}{4}\rfloor}$. Thus
				$$
				\gamma(T_{m.n})=\frac{mn}{m\lfloor \frac{n^2}{4} \rfloor +n\lfloor \frac{m^2}{4} \rfloor}.
				$$
			\end{enumerate}
			\section*{Conclusion}
			In this work, we studied the parameter $\gamma(G)$ as an $l_\infty$-analogue of the classical algebraic connectivity and demonstrated that $\gamma(G)$ characterizes the connectedness of the underlying graph $G$. We established a combinatorial formula for $\gamma(G)$ based on the maximum row sum of the distance matrix of $G$, which enables efficient computation of $\gamma(G)$ using a breadth-first search algorithm.  We  completely characterized all $\ell_\infty$-Fiedler vectors.  Additionally, we explored the relationship between $\gamma(G)$ and several other graph parameters. Furthermore, we derived a formula for $\gamma(G)$ when $G$ is the Cartesian product of finitely many graphs, and calculated $\gamma (G)$ for some well-known graphs.
			
			Graph smoothing with respect to the $l_2$-norm is well-studied, while the corresponding problems for the $l_1$-norm and the $l_\infty$-norm were introduced by Andrade and Dahl \cite{enide-geir-2024}. In this paper, we further investigate the graph smoothing problem for the $l_\infty$-norm.  We conclude with the following table summarizing the graph smoothing problems with respect to the $l_2$-norm, $l_1$-norm, and $l_\infty$-norm. The conclusions in the last column are based on the results presented in this work.

			\begin{table}[htbp]
				\centering
				\Large 
				\resizebox{\textwidth}{!}{%
					\begin{tabular}{|c|c|c|c|}
						\hline
						\textbf{Name} &  $l_2$-graph smoothing  & $l_1$-graph smoothing & $l_\infty$-graph smoothing\\
						\hline
						\textbf{} &   &  & \\
						\textbf{Problem} & $\min\limits_x \sum\limits_{uv \in E(G)} (x_u - x_v)^2$ & $\min\limits_x \sum\limits_{uv \in E(G)} \lvert x_u - x_v \rvert$ & $\min\limits_x \max\limits_{uv \in E(G)} \lvert x_u - x_v \rvert$ \\
						& subject to $\| x \|_2 = 1$, $x \perp e$ & subject to $\| x \|_1 = 1$, $x \perp e$ & subject to $\| x \|_\infty = 1$, $x \perp e$ \\
						\textbf{} &   &  & \\
						\hline
						\textbf{Solution} & $a(G)$ & $b(G)$ & $\gamma (G)$ \\
						\hline
						\textbf{Encodes} & Yes & Yes & Yes \\
						\textbf{the connectedness} &  &  &  \\
						\hline
						\textbf{Relevance} & partition & Sparsest cut &  max transmission \\
						\hline
						\textbf{Strategy} & spectral & combinatorial & combinatorial \\
						&  & (NP-hard) & (Polynomial-time) \\
						\hline
					\end{tabular}%
				}
				\caption{\text{Comparison of graph smoothing problems under different norms.}}
				\label{tab:graph_smoothing}
			\end{table}

			\section*{Acknowledgments}
			M. Rajesh Kannan acknowledges financial support from the ANRF-CRG India and  SRC, IIT Hyderabad. Rahul Roy thanks the University Grants Commission (UGC), India, for financial support. The authors thank ChatGPT for helpful discussions on improving the exposition of the manuscript, identifying typographical inconsistencies, and refinements of proofs. The authors have independently verified all mathematical statements and are solely responsible for the correctness of the results presented in this paper.

			\bibliographystyle{amsplain}
			\bibliography{gamma(X)_ref}

\providecommand{\bysame}{\leavevmode\hbox to3em{\hrulefill}\thinspace}
\providecommand{\MR}{\relax\ifhmode\unskip\space\fi MR }
\providecommand{\MRhref}[2]{%
  \href{http://www.ams.org/mathscinet-getitem?mr=#1}{#2}
}
\providecommand{\href}[2]{#2}
\begin{thebibliography}{10}

\bibitem{enide-geir-2024}
Enide Andrade and Geir Dahl, \emph{Combinatorial {F}iedler theory and graph
  partition}, Linear Algebra Appl. \textbf{687} (2024), 229--251. \MR{4708740}

\bibitem{Distance-spectra}
Mustapha Aouchiche and Pierre Hansen, \emph{Distance spectra of graphs: a
  survey}, Linear Algebra Appl. \textbf{458} (2014), 301--386. \MR{3231823}

\bibitem{graphs-and-matrices}
Ravindra~B. Bapat, \emph{Graphs and matrices}, second ed., Universitext,
  Springer, London; Hindustan Book Agency, New Delhi, 2014. \MR{3289036}

\bibitem{Non-negative-matrices}
Abraham Berman and Robert~J. Plemmons, \emph{Nonnegative matrices in the
  mathematical sciences}, Classics in Applied Mathematics, vol.~9, Society for
  Industrial and Applied Mathematics (SIAM), Philadelphia, PA, 1994, Revised
  reprint of the 1979 original. \MR{1298430}

\bibitem{chung-book-1997}
Fan R.~K. Chung, \emph{Spectral graph theory}, CBMS Regional Conference Series
  in Mathematics, vol.~92, Conference Board of the Mathematical Sciences,
  Washington, DC; by the American Mathematical Society, Providence, RI, 1997.
  \MR{1421568}

\bibitem{alg-con}
Miroslav Fiedler, \emph{Algebraic connectivity of graphs}, Czechoslovak Math.
  J. \textbf{23(98)} (1973), 298--305. \MR{318007}

\bibitem{fiedler-1989}
\bysame, \emph{Laplacian of graphs and algebraic connectivity}, Combinatorics
  and graph theory ({W}arsaw, 1987), Banach Center Publ., vol.~25, PWN, Warsaw,
  1989, pp.~57--70. \MR{1097636}

\bibitem{ratio-bound}
Willem~H. Haemers, \emph{Hoffman's ratio bound}, Linear Algebra Appl.
  \textbf{617} (2021), 215--219. \MR{4218548}

\bibitem{hammack-product-graph}
Richard Hammack, Wilfried Imrich, and Sandi Klav\v~zar, \emph{Handbook of
  product graphs}, second ed., Discrete Mathematics and its Applications (Boca
  Raton), CRC Press, Boca Raton, FL, 2011, With a foreword by Peter Winkler.
  \MR{2817074}

\bibitem{matrix-analysis}
Roger~A. Horn and Charles~R. Johnson, \emph{Matrix analysis}, second ed.,
  Cambridge University Press, Cambridge, 2013. \MR{2978290}

\bibitem{mohar-lap-sur-1}
Bojan Mohar, \emph{The {L}aplacian spectrum of graphs}, Graph theory,
  combinatorics, and applications. {V}ol.\ 2 ({K}alamazoo, {MI}, 1988),
  Wiley-Intersci. Publ., Wiley, New York, 1991, pp.~871--898. \MR{1170831}

\bibitem{mohar-eigen-comb-opti}
Bojan Mohar and Svatopluk Poljak, \emph{Eigenvalues in combinatorial
  optimization}, Combinatorial and graph-theoretical problems in linear algebra
  ({M}inneapolis, {MN}, 1991), IMA Vol. Math. Appl., vol.~50, Springer, New
  York, 1993, pp.~107--151. \MR{1240959}

\bibitem{nica-book-2018}
Bogdan Nica, \emph{A brief introduction to spectral graph theory}, EMS
  Textbooks in Mathematics, European Mathematical Society (EMS), Z\"urich,
  2018. \MR{3821579}

\bibitem{trevisan_spec-partition}
Luca Trevisan, \emph{Lecture notes on graph partitioning expanders and spectral
  methods}, University of California Lecture Notes, Berkeley (2017).

\bibitem{ulrike-spectral}
Ulrike von Luxburg, \emph{A tutorial on spectral clustering}, Stat. Comput.
  \textbf{17} (2007), no.~4, 395--416. \MR{2409803}

\bibitem{clique-indep-bound-wilf}
Herbert~S. Wilf, \emph{Spectral bounds for the clique and independence numbers
  of graphs}, J. Combin. Theory Ser. B \textbf{40} (1986), no.~1, 113--117.
  \MR{830598}

\end{thebibliography}

			\vspace{0.4cm}
			
			\affl{M. Rajesh Kannan}{rajeshkannan@math.iith.ac.in, rajeshkannan1.m@gmail.com}{Department of Mathematics, Indian Institute of Technology Hyderabad, Kandi, Sangareddy 502284, India.}
			
			\affl{Rahul Roy}{ ma23resch11004@iith.ac.in}{Department of Mathematics, Indian Institute of Technology Hyderabad, Kandi, Sangareddy 502284, India.}

		\end{document}